\newtheorem{exa}{Example}
\newtheorem{theo}{Theorem}
\newtheorem{prop}{Proposition}
\newtheorem{lemma}{Lemma}
\newtheorem{remark}{Remark}
\journal{some journal}
\begin{document}

\begin{frontmatter}

\title{A Berry Esseen type limit theorem for Boolean convolution}
\author[aut1]{Octavio Arizmendi}
\ead{octavius@cimat.mx}
\author[aut1]{Mauricio Salazar}
\ead{maurma@cimat.mx}
\address[aut1]{Centro de Investigaci\'on en Matem\'aticas.}

\begin{abstract}
We give estimates on the rate of convergence in the Boolean central limit theorem for the  L\'evy distance. In the case of measures with bounded support we obtain a sharp estimate by giving a qualitative description of this convergence.

\end{abstract}

\begin{keyword}
Boolean convolution \sep Boolean central limit theorem \sep L\'evy distance, Berry-Esseen theorem.
\end{keyword}

\end{frontmatter}

\section{Introduction}

In Non Commutative Probability, as proved by Muraki \cite{Mur2}, there are essentially four notions of independence: classical, free, Boolean, and monotone.  For each type of independence there exists a Central Limit Theorem stating that the normalized sum of independent random variables with finite variance converges to the Gaussian, semicircle \cite{Voi}, Bernoulli \cite{SW} and arcsine \cite{Mur} distributions, respectively .

In applications, in order to apply effectively any limit theorem, one needs a quantitative version of it. For the central limit theorem in probability, this is known as the Berry-Esseen Theorem \cite{Ber,Es}. It states that if $\mu$ is a probability measure with $m_1(\mu)=0$, $m_2(\mu)=1$ and $\int_{\mathbb{R}} | x |^3 d\mu < \infty$, then the distance to the standard Gaussian distribution $\mathcal{N}$ is bounded as follows
$$d_{kol}(D_{\frac{1}{\sqrt{n}}}\mu^{\ast n},\mathcal{N})\leq C \frac{\int_{\mathbb{R}} | x |^3 d\mu}{\sqrt{n}},$$
where $d_{kol}$ denotes the Kolmogorov distance between measures, $D_b\mu$ denotes the dilation of a measure $\mu$ by a factor $b>0$,  $\ast$ denotes the classical convolution, and $C$ is an absolute constant.

For the Free Central Limit Theorem, a similar result was given by Kargin for the bounded case and then broadly improved by Chistyakov and G\"{o}tze \cite{CG} using both the third and fourth moment; if $\mu$ is a probability measure with $m_1(\mu)=0$, $m_2(\mu)=1$ and $m_4(\mu)< \infty$, then
the distance to the standard semicircle distribution $\mathcal{S}$ satisfies 
$$d_{kol}(D_{\frac{1}{\sqrt{n}}}\mu^{\boxplus n},\mathcal{S})\leq C' \frac{|m_3(\mu)|+|m_4(\mu)|^{1/2}}{\sqrt{n}},$$
where the symbol $\boxplus$ denotes the free convolution, and $C'$ is an absolute constant.

In this paper we study the speed of convergence in the Central limit for Boolean convolution. Both of the above results are given in terms of the Kolmogorov distance.  However, in the Boolean Central Limit Theorem there is not convergence in the Kolmogorov distance, as one can easily see from almost any example (see Example \ref{ejemplo} for a particular one). Thus, in this paper we consider the L\'evy distance instead, which seems the most appropriate.

Our first theorem gives a quantitative version of the Boolean Central Limit Theorem for measures with finite fourth moment. That is, we give an estimate for the L\'evy distance to the Bernoulli distribution $\mathbf{b}=\frac{1}{2}\delta_{-1}+\frac{1}{2}\delta_1$.
\begin{theo}\label{teo.noacotado} 
Let $\mu$ be a probability measure such that $m_1(\mu)=0$, $m_2(\mu)=1$, and $m_4(\mu)<\infty$. Then for the measure $\mu_n=D_{\frac{1}{\sqrt{n}}} \mu^{\uplus n}$ we have that 

 $$L(\mu_n,\mathbf{b})\leq \frac{7}{2}\sqrt[3]{\frac{m_4(\mu)-1}{n}}\: \text{for } n\ge 1,$$
where $\uplus$ denotes the Boolean convolution.
\end{theo}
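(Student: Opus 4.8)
The plan is to avoid computing $\mu_n$ explicitly and instead to control it through its low-order moments, exploiting that the Boolean cumulants $\beta_k$ linearize $\uplus$. Recall that the $\beta_k$ are additive under $\uplus$ and satisfy $\beta_k(D_b\mu)=b^k\beta_k(\mu)$, and that for a centered measure $m_2=\beta_2$ and $m_4=\beta_4+\beta_2^2$. From $m_1(\mu)=0,\ m_2(\mu)=1$ we get $\beta_1(\mu)=0,\ \beta_2(\mu)=1,\ \beta_4(\mu)=m_4(\mu)-1$. Applying additivity and the scaling $b=1/\sqrt n$ to $\mu_n=D_{1/\sqrt n}\mu^{\uplus n}$ gives $\beta_1(\mu_n)=0,\ \beta_2(\mu_n)=1,\ \beta_4(\mu_n)=(m_4(\mu)-1)/n$, hence $m_1(\mu_n)=0,\ m_2(\mu_n)=1,\ m_4(\mu_n)=1+(m_4(\mu)-1)/n$, and therefore the single identity that drives the whole estimate,
\[
\int_{\mathbb R}(x^2-1)^2\,d\mu_n(x)=m_4(\mu_n)-2m_2(\mu_n)+1=\frac{m_4(\mu)-1}{n}=:V.
\]
This says precisely that $\mu_n$ concentrates near the set $\{x^2=1\}=\{-1,+1\}$, which is the support of $\mathbf b$.

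Next I would reduce the L\'evy distance to a few elementary mass inequalities. Writing $F_n$ for the distribution function of $\mu_n$ and using that the distribution function of $\mathbf b$ is the two-step function ($0$, then $\tfrac12$ on $[-1,1)$, then $1$), a direct inspection of the defining inequalities $F_{\mathbf b}(x-\epsilon)-\epsilon\le F_n(x)\le F_{\mathbf b}(x+\epsilon)+\epsilon$ shows that $L(\mu_n,\mathbf b)\le\epsilon$ is implied by the four conditions $\mu_n\big((-\infty,-1-\epsilon)\big)\le\epsilon$ and $\mu_n\big((1+\epsilon,\infty)\big)\le\epsilon$ (light outer tails), together with $\mu_n\big([1-\epsilon,\infty)\big)\ge\tfrac12-\epsilon$ and $\mu_n\big((-\infty,-1+\epsilon]\big)\ge\tfrac12-\epsilon$ (at least half the mass, up to $\epsilon$, sits near each of $\pm1$).

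The verification is where the identity above enters, through Chebyshev's inequality applied to $(x^2-1)^2$: for every $s>0$ one has $\mu_n\big(\{\,|x^2-1|>s\,\}\big)\le V/s^2$. Since $\{x<-1-\epsilon\}$ and $\{x>1+\epsilon\}$ are contained in $\{|x^2-1|>2\epsilon\}$, the outer-tail conditions follow once $V/(4\epsilon^2)\le\epsilon$. For the two lower bounds I would let $p_+$ and $p_-$ be the masses $\mu_n$ puts on $[\sqrt{1-s},\sqrt{1+s}]$ and $[-\sqrt{1+s},-\sqrt{1-s}]$; Chebyshev gives $p_++p_-\ge1-V/s^2$, while the constraint $m_1(\mu_n)=0$, combined with the elementary inequality $|x|\le x^2$ on $\{|x|\ge1\}$ to tame the first moment of the tails, forces $|p_+-p_-|$ to be small. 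Taking $s\le\epsilon$ so that each interval lies within $\epsilon$ of the corresponding atom then yields $p_\pm\ge\tfrac12-O(V^{1/3})$, which is exactly the remaining pair of conditions.

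Finally I would optimize: letting $\epsilon$ and $s$ both be comparable to $V^{1/3}$ balances the horizontal scale $\epsilon$ against the mass defect $V/s^2$, and the scheme closes with $\epsilon$ a fixed multiple of $V^{1/3}=\sqrt[3]{(m_4(\mu)-1)/n}$. I expect the main obstacle to be purely quantitative: balancing $p_+$ and $p_-$ through the first-moment constraint (the tail contribution to $\int x\,d\mu_n$ must be estimated sharply, since a crude Cauchy--Schwarz loses a factor and spoils the exponent), and then tracking all constants through the optimization so as to obtain exactly the factor $\tfrac72$ uniformly for $n\ge1$. In the regime where $V$ is not small the statement is anyway immediate because $L\le1$, so the genuine work lies entirely in the small-$V$ regime.
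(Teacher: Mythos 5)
Your proposal is correct and follows essentially the same route as the paper: the cumulant computation $r_4(\mu_n)=r_4(\mu)/n$, Chebyshev's inequality applied to $(X^2-1)^2$ (the paper's Lemma \ref{cumul}, since $\mathrm{Var}(\mu_n^2)=m_4(\mu_n)-1=V$), the moment constraints $m_1(\mu_n)=0$, $m_2(\mu_n)=1$ to balance the masses near $\pm 1$ (the paper's Lemma \ref{ldmuab}, whose tail estimate uses exactly your $|t|\le t^2$ device to avoid the lossy Cauchy--Schwarz step you flag), and the choice $\epsilon\sim V^{1/3}$. The constant tracking you defer is precisely what Lemma \ref{ldmuab} carries out, producing the factor $\tfrac{7}{2}$, with the regime of large $V$ handled trivially just as you note.
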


Our second contribution specializes in the case of measures with bounded support.  In this case we give a qualitative description of the Boolean Central Limit Theorem which allows us to conclude a better bound for the L\'evy distance to the Bernoulli distribution.
\begin{theo}\label{teo.acotado}
Let $\mu$ be a probability measure such that $m_1(\mu)=0$, $m_2(\mu)=1$, and $supp(\mu)\in [-K,K]$. Then the measure   $\mu_n:=D_{\frac{1}{\sqrt{n}}} \mu^{\uplus n}$ satisfies for $\sqrt{n}>K$ that:

\begin{enumerate}
\item[1)] $supp\:\mu_{n}\subset [\frac{-K}{\sqrt{n}},\frac{K}{\sqrt{n}}]\cup \lbrace x_1,x_2 \rbrace$, where $|(-1)-x_1|\leq \frac{K}{\sqrt{n}}$ and $|1-x_2|\leq \frac{K}{\sqrt{n}}$.
\item[2)] For $p=\mu_n(\lbrace x_1 \rbrace)$, $q=\mu_n(\lbrace x_2 \rbrace)$ and $r=\mu_n([\frac{-K}{\sqrt{n}},\frac{K}{\sqrt{n}}])$, we have that $p,q\in [\frac{1}{2}-\frac{2K}{\sqrt{n}},\frac{1}{2}+\frac{K}{2\sqrt{n}}]$ and $r<\frac{4K}{\sqrt{n}}$.
\end{enumerate} 
In particular, the L\'evy distance between $\mu_n$ and $\mathbf{b}$ is bounded by
\begin{equation*}
L(\mu_{n},\mathbf{b})\le \frac{2K}{\sqrt{n}}.
\end{equation*}
\end{theo}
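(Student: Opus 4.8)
The plan is to work entirely with the Boolean linearizing transform $K_\mu(z)=z-1/G_\mu(z)$, which is additive, $K_{\mu\uplus\nu}=K_\mu+K_\nu$, and transforms under dilation by $K_{D_b\mu}(z)=bK_\mu(z/b)$, so that $K_{\mu_n}(z)=\sqrt n\,K_\mu(\sqrt n\,z)$. The conceptual heart of the argument is to recognize $K_\mu$ itself as a Cauchy transform: writing the Nevanlinna representation of $F_\mu=1/G_\mu$ and imposing $m_1(\mu)=0$, $m_2(\mu)=1$, one finds $K_\mu(z)=G_\tau(z)=\int(z-x)^{-1}\,d\tau(x)$ for a \emph{probability} measure $\tau$ with $m_1(\tau)=m_3(\mu)$ and $m_2(\tau)=m_4(\mu)-1$. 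Moreover, since $\mathrm{supp}(\mu)\subset[-K,K]$ forces $G_\mu$ to be analytic and non‑vanishing on $\mathbb{R}\setminus[-K,K]$, the function $K_\mu$ is analytic there, so $\mathrm{supp}(\tau)\subset[-K,K]$. Hence $K_{\mu_n}=G_{\tau_n}$ where $\tau_n=D_{1/\sqrt n}\tau$ is a probability measure supported in $[-K/\sqrt n,K/\sqrt n]$, and $F_{\mu_n}(z)=z-G_{\tau_n}(z)$.

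Given this, I would read off the atoms of $\mu_n$ as the real zeros of $F_{\mu_n}$, i.e.\ the solutions of $z=G_{\tau_n}(z)$ lying outside the convex hull of $\mathrm{supp}(\tau_n)$. On each of the intervals to the right and to the left of that hull the map $z-G_{\tau_n}(z)$ is strictly increasing, since its derivative is $1+\int(z-x)^{-2}\,d\tau_n>0$, and it runs from $-\infty$ to $+\infty$; thus there is exactly one zero on each side, call them $x_2>0$ and $x_1<0$. Because $G_{\mu_n}=1/F_{\mu_n}$ is real off $[-K/\sqrt n,K/\sqrt n]\cup\{x_1,x_2\}$, this yields part (1): $\mathrm{supp}(\mu_n)\subset[-K/\sqrt n,K/\sqrt n]\cup\{x_1,x_2\}$. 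To place $x_2$ I sandwich $G_{\tau_n}(x_2)=\int(x_2-x)^{-1}d\tau_n$ between $(x_2+K/\sqrt n)^{-1}$ and $(x_2-K/\sqrt n)^{-1}$ using $\mathrm{supp}(\tau_n)\subset[-K/\sqrt n,K/\sqrt n]$; the fixed‑point relation $x_2=G_{\tau_n}(x_2)$ then turns these into two quadratic inequalities whose roots are $x_\pm=\tfrac12\bigl(\pm K/\sqrt n+\sqrt{K^2/n+4}\bigr)$, giving $x_-\le x_2\le x_+$, from which an elementary check yields $|x_2-1|\le K/\sqrt n$. The argument for $x_1$ is symmetric.

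For the masses I use the residue formula $q=\mu_n(\{x_2\})=1/F_{\mu_n}'(x_2)=\bigl(1+\int(x_2-x)^{-2}d\tau_n\bigr)^{-1}$, and similarly for $p$. The upper bound is the delicate point: estimating merely through $x_2\in[1-K/\sqrt n,1+K/\sqrt n]$ is too weak to reach $\tfrac12+\tfrac{K}{2\sqrt n}$, so instead I apply Jensen's inequality, $\int(x_2-x)^{-2}d\tau_n\ge\bigl(\int(x_2-x)^{-1}d\tau_n\bigr)^2=x_2^2\ge x_-^2$, together with the defining identity $x_-^2=1-(K/\sqrt n)\,x_-$ and $x_-\le1$, to obtain $q\le\bigl(2-(K/\sqrt n)x_-\bigr)^{-1}\le\tfrac12+\tfrac{K}{2\sqrt n}$. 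For the lower bound I estimate $\int(x_2-x)^{-2}d\tau_n\le(x_2-K/\sqrt n)^{-2}\le(1-2K/\sqrt n)^{-2}$, which after the elementary inequality $\tfrac{t^2}{t^2+1}\ge t-\tfrac12$ for $t=1-2K/\sqrt n\in(0,1)$ gives $q\ge\tfrac12-\tfrac{2K}{\sqrt n}$; the same bounds hold for $p$. Since $x_1,x_2$ are the only atoms outside $[-K/\sqrt n,K/\sqrt n]$, one has $p+q+r=1$, whence $r=1-p-q\le 4K/\sqrt n$, completing part (2).

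Finally, for the Lévy bound I compare distribution functions directly with $\epsilon=2K/\sqrt n$. If $2K/\sqrt n\ge1$ the estimate is trivial, the Lévy distance never exceeding $1$; otherwise the atoms are ordered $x_1<-K/\sqrt n<K/\sqrt n<x_2$, and verifying the defining inequalities $F_{\mathbf b}(x-\epsilon)-\epsilon\le F_{\mu_n}(x)\le F_{\mathbf b}(x+\epsilon)+\epsilon$ on the three ranges $x<-1$, $-1\le x<1$, $x\ge1$ reduces exactly to the facts already proved, namely $|x_1+1|\le K/\sqrt n$, $|x_2-1|\le K/\sqrt n$, and $p,q\ge\tfrac12-2K/\sqrt n$ (the last used in the form $p+r=1-q\le\tfrac12+2K/\sqrt n$). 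I expect the two genuine obstacles to be the structural identification $K_\mu=G_\tau$ with the correct total mass and support, and the sharp constant in the mass upper bound, where the combination of Jensen's inequality with the exact root $x_-$ is what saves the estimate.
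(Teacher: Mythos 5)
Your proposal is correct, and for the setup and part 1 it essentially coincides with the paper's argument: your identification $K_\mu=G_\tau$ with $\tau$ a probability measure is exactly the paper's use of Maassen's representation (Proposition \ref{neva}), your analyticity argument for $\mathrm{supp}(\tau)\subset[-K,K]$ reproves what the paper cites from Hasebe (Proposition \ref{hase}), and your fixed-point sandwich $x_-\le x_2\le x_+$ is an equivalent repackaging of the paper's sign-change estimates locating the unique zero of $F_{\mu_n}$ on each side of $[-K/\sqrt n,K/\sqrt n]$. One small repair there: ``runs from $-\infty$ to $+\infty$'' is not literally justified, since $G_{\tau_n}(z)$ need not blow up as $z$ approaches the edge of $\mathrm{supp}(\tau_n)$; existence of the zeros should come from an explicit intermediate-value computation as in the paper, e.g.\ $F_{\mu_n}(x)\le x-\frac{1}{x+K/\sqrt n}<0$ on $(K/\sqrt n,\,1-K/\sqrt n)$ and $F_{\mu_n}(x)>0$ for $x>1+K/\sqrt n$, after which your sandwich bounds the zero. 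Where you genuinely diverge is part 2. The paper never touches $F_{\mu_n}'$: it deduces $m_1(\mu_n)=0$, $m_2(\mu_n)=1$ from the representation and runs a moment-balance argument (with intermediate points $y_1,y_2$) giving $|p-q|\le K/\sqrt n$ and $p+q\ge 1-3K/\sqrt n$, hence the two-sided mass bounds and $r\le 4K/\sqrt n$. You instead compute the masses exactly by residues, $q=\bigl(1+\int(x_2-x)^{-2}\,d\tau_n\bigr)^{-1}$, and estimate via Jensen plus the fixed-point identity $x_2=G_{\tau_n}(x_2)$ for the upper bound (the algebra checks out: $x_-^2=1-(K/\sqrt n)x_-$ and $x_-\le1$ give $q\le\frac{1}{2-K/\sqrt n}\le\frac12+\frac{K}{2\sqrt n}$ since $K<\sqrt n$), and via the support-distance bound together with $\frac{t^2}{t^2+1}\ge t-\frac12$ for the lower bound (valid since $(1-t)(2t^2-t+1)\ge0$ on $[0,1]$; you should note the case $t=1-2K/\sqrt n\le0$ is vacuous). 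Your route buys exact formulas for the atom masses and in particular shows directly that the zeros carry strictly positive mass, at the cost of residue calculus and sharper elementary inequalities; the paper's route is softer, using only the first two moments of $\mu_n$, and both deliver identical constants and the same final Lévy estimate with $\epsilon=2K/\sqrt n$.
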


The remainder of the paper is organized as follows: In Section 2 we recall the definition of Boolean convolution in an analytic way. For that purpose we introduce the Cauchy transform, the $F$-transform and the $K$-transform. Then we study some properties of the $F$-transform that are used to prove Theorem \ref{teo.noacotado}. Finally we introduce the Boolean cumulants; they are the base of the proof of Theorem \ref{teo.acotado}. In Section 3 we prove Theorem \ref{teo.noacotado} and Theorem \ref{teo.acotado}, and by an example we show that the estimate in the latter is sharp.

\section{Preliminaries}

We denote by $\mathcal{M}$ the set of Borel probability measures on $\mathbb{R}$ and by $\mathcal{M}_0^1$ the subset of $\mathcal{M}$ of probability measures with zero mean and unit variance.  The support of a measure $\mu \in \mathcal{M}$ is denoted by $supp(\mu)$. For $\mu\in \mathcal{M}$ let $D_a\mu$ denote the dilation of a measure $\mu$ by a factor $a>0$; this means that $D_a\mu(B)=\mu(a^{-1}B)$ for all Borel sets $B \subset \mathbb{R}$.

For $\mu,\nu \in \mathcal{M}$ define the L\'evy distance between them to be
$$L(\mu,\nu):=inf \{ \epsilon>0 \: | \: F(x-\epsilon)-\epsilon\leq G(x)\leq F(x+\epsilon)+\epsilon \: \: \: \text{for all } x\in \mathbb{R} \},$$
where $F$ and $G$ are the cumulative distribution functions of $\mu$ and $\nu$ respectively.

It is well known that the L\'evy distance of two measures is the side-length of the largest square that can be inscribed between the graphs, adding vertical segments where there is a discontinuity, of the cumulative distribution functions of the measures.

\subsection{Boolean convolution}

Boolean convolution is the main object of study in this paper.  It corresponds to the sum of Boolean-independent random variables. We introduce it in a purely analytic way, so we skip the framework of non-commutative probability spaces.

By $\mathbb{C}^+$  and $\mathbb{C}^-$ we denote the open upper and lower complex half-planes, respectively.
The Cauchy transform of a measure $\mu \in \mathcal{M}$ is defined as
$$G_{\mu}(z):=\int_{\mathbb{R}}\frac{1}{z-t}d\mu(t)\quad \text{for}\:z\in \mathbb{C}^+.$$
In fact the Cauchy transform is well defined in $\mathbb{C} \setminus supp(\mu)$, but it is defined only for $z\in \mathbb{C}^+$ because in that region the Cauchy transform determines uniquely to the measure (i.e. $G_{\mu}(z)=G_{\nu}(z)$ for $z\in \mathbb{C}^+$ implies $\mu=\nu$ ). More precisely, we can recover a measure $\mu \in \mathcal{M}$ from its Cauchy transform via the Stieltjes inversion formula:
\begin{equation}
\mu((a,b])=\displaystyle\lim_{\delta \downarrow 0}\displaystyle\lim_{\epsilon \downarrow 0}-\frac{1}{\pi}\int_{a+\delta}^{b+\delta}Im(G_{\mu}(x+i\epsilon))dx.
\end{equation}

We need to mention some properties of the Cauchy transform. For a measure $\mu \in \mathcal{M}$ the Cauchy transform is analytic in $\mathbb{C}^+$ and maps $\mathbb{C}^+$ to $\mathbb{C}^-$ (and vice versa). If $supp(\mu)\subset[-K,K]$ , then
\begin{equation}\label{CTsupp}
G_{\mu}(x)\neq 0 \quad \text{for}\: x\in \mathbb{R}\setminus [-K,K].
\end{equation}
The Cauchy transform of the dilation $D_a\mu$ is given by 
 \begin{equation}\label{gtd}
G_{D_a\mu}(z)=\frac{1}{a}G_{\mu}(\frac{z}{a})\quad \text{for}\: z\in \mathbb{C}^+.
\end{equation}

Now, we give two more transforms before defining the Boolean convolution. The reciprocal Cauchy transform (or $F$-transform) of a measure $\mu \in \mathcal{M}$ is defined as
$$F_{\mu}(z):=\frac{1}{G_{\mu}(z)}\quad \:\text{for} z\in \mathbb{C}^+,$$
and the self energy (or $K$-transform) of a measure $\mu \in \mathcal{M}$ is defined as
$$K_{\mu}(z):=z-F_{\mu}(z)\quad \text{for}\: z\in \mathbb{C}^+.$$

\begin{remark}\label{rem.cauchy}
Since the Cauchy transform $G_{\mu}$ determines uniquely to the measure $\mu$ in $\mathbb{C}^+$, then also the $F$-transform $F_{\mu}$ and the $K$-transform $K_{\mu}$ do.
\end{remark}

Given $\mu,\nu \in \mathcal{M}$ the \emph{Boolean convolution} $\mu \uplus \nu \in \mathcal{M}$, introduced by Speicher and Woroudi \cite{SW}, is defined by the equation
$$K_{\mu \uplus \nu}(z)=K_{\mu}(z)+K_{\nu}(z) \quad \text{for} \: z\in \mathbb{C}^+.$$
Thus, to obtain $\mu \uplus \nu$, we calculate $G_{\mu \uplus \nu}(z)$ from $K_{\mu \uplus \nu}(z)$, and then we use the Stieltjes inversion formula.

\subsection{Properties of the $F$-transform}
We use the $F$-transform to identify potential atoms of a measure: if $a\in \mathbb{R}$ is an atom of $\mu \in \mathcal{M}$, then $F_{\mu}(a)=0$. The properties of this subsection are important for that purpose.

It is easy to see that if $\mu \in \mathcal{M}$ and $a>0$, then
\begin{equation}\label{ftd}
F_{D_a \mu}(z)=aF(z/a) \quad \text{for } z\in \mathbb{C}^+.
\end{equation}
Directly from the definitions above we obtain that  
\begin{equation}\label{ftc}
F_{\mu^{\uplus n}}(z)=(1-n)z-nF_{\mu}(z) \quad \text{for}\:z\in \mathbb{C}^+.
\end{equation}

The next proposition is a direct consequence of  Proposition 2.2 in Maassen \cite{Maa}.
\begin{prop}\label{neva}
Let $\mu$ be a probability measure on $\mathbb{R}$. Then $\mu \in \mathcal{M}_0^1$ if and only if there exists a measure $\nu \in \mathcal{M}$ such that
$$F_{\mu}(z)=z+\int_{-\infty}^{\infty} \frac{1}{t-z}d\nu(t)=z-G_{\nu}(z) \quad \text{for} \: z\in \mathbb{C}^+ .$$
\end{prop}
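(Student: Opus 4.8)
The plan is to derive the equivalence directly from the Nevanlinna-type representation of the $F$-transform supplied by Maassen's Proposition 2.2. The relevant structural facts are that $F_{\mu}$ maps $\mathbb{C}^+$ into $\mathbb{C}^+$ (being the reciprocal of $G_{\mu}$, which sends $\mathbb{C}^+$ to $\mathbb{C}^-$) and that $F_{\mu}(z)/z\to 1$ as $z\to\infty$ nontangentially, since $G_{\mu}(z)\sim 1/z$. Maassen's result asserts that, precisely when $\mu$ has finite second moment, these properties force a representation
$$F_{\mu}(z)=z-a+\int_{\mathbb{R}}\frac{1}{t-z}\,d\sigma(t)=z-a-G_{\sigma}(z),$$
with $a\in\mathbb{R}$ and $\sigma$ a finite positive Borel measure. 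Thus the whole proposition reduces to identifying the free parameters $a$ and $\sigma(\mathbb{R})$ in terms of the first two moments of $\mu$.

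The key computation is a matching of asymptotic expansions at infinity. First I would expand $G_{\mu}(z)=1/z+m_1(\mu)/z^2+m_2(\mu)/z^3+\cdots$ and invert to obtain
$$F_{\mu}(z)=z-m_1(\mu)+\frac{m_1(\mu)^2-m_2(\mu)}{z}+O\!\left(\tfrac{1}{z^2}\right).$$
On the other hand, expanding $G_{\sigma}(z)=\sigma(\mathbb{R})/z+O(1/z^2)$ gives $z-a-G_{\sigma}(z)=z-a-\sigma(\mathbb{R})/z+O(1/z^2)$. Comparing the constant terms yields $a=m_1(\mu)$, and comparing the $1/z$ coefficients yields $\sigma(\mathbb{R})=m_2(\mu)-m_1(\mu)^2$; that is, $\sigma(\mathbb{R})$ is exactly the variance of $\mu$.

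With these identifications both implications are immediate. If $\mu\in\mathcal{M}_0^1$, then $a=m_1(\mu)=0$ and $\sigma(\mathbb{R})=m_2(\mu)=1$, so $\sigma$ is a probability measure $\nu$ and $F_{\mu}(z)=z-G_{\nu}(z)$, as claimed. Conversely, a representation $F_{\mu}(z)=z-G_{\nu}(z)$ with $\nu\in\mathcal{M}$ (so $\nu(\mathbb{R})=1$) is of Maassen's form with $a=0$ and $\sigma=\nu$ finite; his equivalence then guarantees that $\mu$ has finite second moment, and the matching forces $m_1(\mu)=0$ together with $m_2(\mu)-m_1(\mu)^2=1$, whence $m_1(\mu)=0$ and $m_2(\mu)=1$, i.e. $\mu\in\mathcal{M}_0^1$.

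I expect the main obstacle to be making the asymptotic matching rigorous rather than merely formal. The expansion of $F_{\mu}$ is only valid to the order that moments exist, and to read off $m_1(\mu)$ and $m_2(\mu)$ from the coefficients one must invoke the standard Hamburger-type correspondence between the nontangential behaviour of a Cauchy transform at infinity and the existence of moments. This is exactly the content that Maassen's Proposition 2.2 packages as an ``if and only if''; so provided I apply it along nontangential (Stolz) approaches to infinity and check that the error terms are genuinely $O(1/z^2)$ under the finite-variance hypothesis, the remaining bookkeeping is routine.
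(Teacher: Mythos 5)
Your proof is correct and follows essentially the same route as the paper, which offers no argument of its own beyond stating that the proposition ``is a direct consequence of Proposition 2.2 in Maassen \cite{Maa}.'' Your asymptotic matching of the constant and $1/z$ terms (giving $a=m_1(\mu)$ and $\sigma(\mathbb{R})=m_2(\mu)-m_1(\mu)^2$, rigorous nontangentially once the second moment is finite, which Maassen's equivalence supplies in the converse direction) is precisely the routine parameter identification implicit in that citation.
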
 
The following result is Lemma 2.1 in Hasebe \cite{Has}.
\begin{prop}\label{hase}
Let $\mu$ and $\nu$ be probability measures on $\mathbb{R}$ such that $F_{\mu}(z)=z-G_{\nu}(z)$ for $z\in \mathbb{C}^+$. Then: 
\begin{itemize}

\item[1)]
$\mathbb{C} \setminus  supp(\mu)$ is the maximal domain where $G_{\mu}(z)$ is analytic.
\item[2)]
$\mathbb{C} \setminus  supp(\nu) $ is the maximal domain where $F_{\mu}(z)$ is analytic, and $F_{\mu}(z)=z-G_{\nu}(z)$ there.
\item[3)]
$\lbrace x\in  \mathbb{R} \setminus  supp(\mu)\:|\: G_{\mu}(x)\neq 0 \rbrace \subset \mathbb{R} \setminus  supp(\nu)$, and $\lbrace x\in \mathbb{R} \setminus  supp(\nu) \:|\: F_{\mu}(x)\neq 0 \rbrace \subset \mathbb{R} \setminus  supp(\mu).$
\end{itemize}
\end{prop}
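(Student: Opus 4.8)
The plan is to reduce everything to the measure $\nu$ attached to $\mu$ by Proposition \ref{neva} and to track it through the operations defining $\mu_n$. Since $\mu\in\mathcal{M}_0^1$, Proposition \ref{neva} gives $\nu\in\mathcal{M}$ with $K_\mu(z)=z-F_\mu(z)=G_\nu(z)$. First I would check that $supp(\nu)\subset[-K,K]$: by \eqref{CTsupp} and the fact that $G_\mu$ maps each half-plane into the opposite one, $G_\mu$ is analytic and nonvanishing on $\mathbb{C}\setminus[-K,K]$, so $F_\mu=1/G_\mu$ is analytic there, and part 2) of Proposition \ref{hase} identifies this analyticity domain as $\mathbb{C}\setminus supp(\nu)$. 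Next, using additivity of the $K$-transform under $\uplus$ together with the dilation rules \eqref{ftd}--\eqref{gtd} (which give $K_{D_a\rho}(z)=aK_\rho(z/a)$), I compute
\begin{equation*}
K_{\mu_n}(z)=\tfrac{1}{\sqrt n}\,K_{\mu^{\uplus n}}(\sqrt n\,z)=\sqrt n\,G_\nu(\sqrt n\,z)=G_{\nu_n}(z),\qquad \nu_n:=D_{1/\sqrt n}\nu,
\end{equation*}
so that $F_{\mu_n}(z)=z-G_{\nu_n}(z)$ with $supp(\nu_n)\subset[-\varepsilon,\varepsilon]$, where $\varepsilon:=K/\sqrt n<1$.

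For part 1), I would locate $supp(\mu_n)$ via Proposition \ref{hase} applied to the pair $(\mu_n,\nu_n)$: outside $supp(\nu_n)$ the transform $F_{\mu_n}$ is analytic and (mapping each half-plane into itself) vanishes only on $\mathbb{R}$, so $G_{\mu_n}=1/F_{\mu_n}$ is analytic except at the real zeros of $F_{\mu_n}$, which are the atoms of $\mu_n$. Thus $supp(\mu_n)\subset[-\varepsilon,\varepsilon]\cup\{x:F_{\mu_n}(x)=0\}$. On $(\varepsilon,\infty)$ the function $x\mapsto G_{\nu_n}(x)$ is positive and strictly decreasing while the identity is increasing, so $F_{\mu_n}(x)=x-G_{\nu_n}(x)$ has exactly one zero $x_2>\varepsilon$; symmetrically one zero $x_1<-\varepsilon$. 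To estimate them I use the fixed-point relation $\int(x_2-t)^{-1}\,d\nu_n(t)=x_2$ together with $|t|\le\varepsilon$ on $supp(\nu_n)$ and $\nu_n(\mathbb{R})=1$, which give $(x_2+\varepsilon)^{-1}\le x_2\le(x_2-\varepsilon)^{-1}$, hence $|x_2^2-1|\le x_2\varepsilon$ and finally $|x_2-1|=|x_2^2-1|/(x_2+1)<\varepsilon$; the same argument gives $|x_1+1|<\varepsilon$.

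For part 2), the mass at a zero $a$ of $F_{\mu_n}$ is $\mu_n(\{a\})=1/F_{\mu_n}'(a)$, where $F_{\mu_n}'(a)=1+\int(a-t)^{-2}\,d\nu_n(t)$. For the upper bound on $q$ I apply Cauchy--Schwarz, $\int(x_2-t)^{-2}\,d\nu_n\ge\big(\int(x_2-t)^{-1}\,d\nu_n\big)^2=x_2^2$, so $q\le(1+x_2^2)^{-1}$; feeding in the sharper lower bound $x_2\ge\frac{-\varepsilon+\sqrt{\varepsilon^2+4}}{2}$ (the positive root of $x^2+\varepsilon x-1=0$, coming from $x_2^2\ge 1-\varepsilon x_2$, which is $<1$ while $\tfrac{2}{1+\varepsilon}>1$ since $\varepsilon<1$) yields $q\le\frac12+\frac\varepsilon2$. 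For the lower bound I use the crude estimate $\int(x_2-t)^{-2}\,d\nu_n\le(x_2-\varepsilon)^{-2}$, giving $q\ge(x_2-\varepsilon)^2/\big((x_2-\varepsilon)^2+1\big)$, which together with $x_2>1-\varepsilon$ is $\ge\frac12-2\varepsilon$ (the case $\varepsilon\ge\frac14$ being trivial since then $\frac12-2\varepsilon\le0$). The identical bounds hold for $p$, and then $r=1-p-q<4\varepsilon$ follows from $\mu_n(\mathbb{R})=1$.

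Finally, for the Lévy bound I would verify directly that the parameter $2\varepsilon$ satisfies the two inequalities in the definition of $L$, comparing the distribution function $\Phi_n$ of $\mu_n$ (a jump $p$ at $x_1$, a ramp of height $r$ inside $[-\varepsilon,\varepsilon]$, a jump $q$ at $x_2$) with that of $\mathbf{b}$ (jumps $\tfrac12$ at $\mp1$). Every case reduces to the already-proved estimates $|x_1+1|,|x_2-1|<\varepsilon$ and $|p-\tfrac12|,|q-\tfrac12|\le2\varepsilon$; for instance, for $-1\le x<1$ one has $x+2\varepsilon>x_1$, so $\Phi_n(x+2\varepsilon)+2\varepsilon\ge p+2\varepsilon\ge\tfrac12=\Phi_{\mathbf b}(x)$, and the remaining cases are analogous. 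I expect the genuinely delicate point to be the sharp upper estimate $q\le\frac12+\frac\varepsilon2$, since the naive bound $x_2>1-\varepsilon$ is too weak there and one must exploit the quadratic inequality $x_2^2\ge 1-\varepsilon x_2$; the support localization and the Lévy comparison are then routine bookkeeping.
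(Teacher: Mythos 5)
Your proposal does not prove the statement it was supposed to prove. Proposition \ref{hase} is an abstract complex-analytic fact about an \emph{arbitrary} pair of measures $\mu,\nu$ linked by $F_{\mu}(z)=z-G_{\nu}(z)$: the maximality of $\mathbb{C}\setminus supp(\mu)$ as the domain of analyticity of $G_{\mu}$, the maximality of $\mathbb{C}\setminus supp(\nu)$ for $F_{\mu}$, and the two support inclusions in part 3). What you wrote is instead an (outline of a) proof of Theorem \ref{teo.acotado}, and --- this is the fatal point --- it \emph{invokes Proposition \ref{hase} itself}, twice: once to identify the analyticity domain of $F_{\mu}$ with $\mathbb{C}\setminus supp(\nu)$, and once, ``applied to the pair $(\mu_n,\nu_n)$,'' to locate $supp(\mu_n)$. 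As a proof of the proposition this is circular, and none of the proposition's three claims is ever actually argued. For the record, the paper does not prove this proposition either: it is quoted verbatim as Lemma 2.1 of Hasebe \cite{Has}, so the honest options were to cite it or to reprove Hasebe's lemma, not to derive downstream consequences from it.

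A genuine proof would have to address the maximality assertions, which your text never touches. The standard route: the integral formula shows $G_{\mu}$ is analytic on $\mathbb{C}\setminus supp(\mu)$ and satisfies $G_{\mu}(\bar z)=\overline{G_{\mu}(z)}$; if a single analytic function extended $G_{\mu}$ from both half-planes to a neighborhood of some $x_0\in supp(\mu)$, its values on a real interval around $x_0$ would be real, so $Im\,G_{\mu}(x+i\epsilon)\to 0$ there and the Stieltjes inversion formula would force $\mu$ to vanish near $x_0$, a contradiction; this proves 1), and the same argument applied to $G_{\nu}$ proves 2), because $F_{\mu}=z-G_{\nu}$ on $\mathbb{C}^+$ extends analytically exactly where $G_{\nu}$ does. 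Part 3) then follows from the maximality statements: if $x\notin supp(\mu)$ and $G_{\mu}(x)\neq 0$, then $1/G_{\mu}$ is analytic in a neighborhood of $x$ and agrees with $F_{\mu}$ on $\mathbb{C}^+$, so by the maximality in 2) we get $x\notin supp(\nu)$; the second inclusion is symmetric. None of these steps appears in your proposal. (As a separate matter, your sketch for Theorem \ref{teo.acotado} is a genuinely different and plausible route from the paper's --- you extract the atom masses from $\mu_n(\lbrace a\rbrace)=1/F_{\mu_n}'(a)$ and bound them by Cauchy--Schwarz, where the paper uses only $m_1(\mu_n)=0$, $m_2(\mu_n)=1$ and elementary bookkeeping --- but that is not the statement under review, and its validity still rests on the unproved Proposition \ref{hase}.)
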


\subsection{Boolean cumulants}
For a probability measure $\mu$ with all moments,  the Boolean cumulants are defined as 
the coefficients $r_n = r_n (\mu)$ in the series
\begin{equation}\label{Boolean cumulants}
K_\mu(z) =\sum^\infty_{n=1}r_nz^{1-n}.
\end{equation}

For a probability measure $\mu$ on $\mathbb{R}$ with finite fourth moment, we may also define the first four Boolean cumulants $r_1(\mu)$, $r_2(\mu)$, $r_3(\mu)$, and $r_4(\mu)$ of $\mu$ by the equations
\begin{eqnarray*}
m_1(\mu)&=&r_1(\mu),\\
m_2(\mu)&=&r_1(\mu)^2+r_2(\mu),\\
m_3(\mu)&=&r_1(\mu)^3+2r_1(\mu)r_2(\mu)+r_3(\mu),\: \text{and}\\   
m_4(\mu)&=&r_1(\mu)^4+3r_1(\mu)^2r_2(\mu)+r_2(\mu)^2+2r_3(\mu)r_1(\mu)+r_4(\mu),       
\end{eqnarray*}
where $m_n(\mu)$ is the $n$-th moment of $\mu$.
Note that for $\mu \in \mathcal{M}_0^1$ we have that
\begin{equation}\label{fourmoment.cumul}
m_4(\mu)=1+r_4(\mu).
\end{equation}

Similarly to the classical cumulants, the Boolean cumulants defined above satisfy for $\mu,\nu\in \mathcal{M}$ and $i\in\mathbb{N}$ that
\begin{equation}\label{cumu.adit}
r_i(\mu \uplus \nu)=r_i(\mu)+r_i(\nu)   
\end{equation}
and
\begin{equation}\label{cumu.dila}
r_i(D_a\mu)=a^ir_i(\mu).
\end{equation}

\section{Proofs}
We first prove a theorem that gives the L\'evy distance between a measure and the Bernoulli distribution in terms of the fourth moment. As a direct consequence, we obtain Theorem $\ref{teo.noacotado}$. Then we prove Theorem $\ref{teo.acotado}$ and give an example  that shows that the bound in this case is sharp.

Let $\mu$ be a probability measure, and let $X$ be a random variable with distribution $\mu$. By $\mu^2$ we denote the distribution of $X^2$. Note that for $\epsilon>0$ we have
\begin{equation}\label{mu.mu2} 
\mu((-1-\epsilon,-1+\epsilon)\cup (1-\epsilon,1+\epsilon))\geq \mu^2((1-\epsilon,1+\epsilon)).
\end{equation}

\begin{lemma}\label{cumul}
If $\mu \in \mathcal{M}_0^1$ and $m_4(\mu)<\infty$, then:

\begin{itemize}
\item[(i)]
$Var(\mu^2)=r_4(\mu)$.
\item[(ii)]
$\mu^2((1-\epsilon,1+\epsilon))\geq 1-\frac{r_4(\mu)}{\epsilon^2}$.

\end{itemize}
\end{lemma}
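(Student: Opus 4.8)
The plan is to reduce both parts to elementary computations with the moments of $\mu^2$ together with Chebyshev's inequality. First I would record that the moments of $\mu^2$ are the even moments of $\mu$: if $X$ has distribution $\mu$, then $m_k(\mu^2)=\mathbb{E}[(X^2)^k]=\mathbb{E}[X^{2k}]=m_{2k}(\mu)$. In particular, using $\mu\in\mathcal{M}_0^1$, the first moment of $\mu^2$ is $m_1(\mu^2)=m_2(\mu)=1$ and its second moment is $m_2(\mu^2)=m_4(\mu)$, so that $Var(\mu^2)=m_2(\mu^2)-m_1(\mu^2)^2=m_4(\mu)-1$.

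For part (i), I would then invoke the cumulant--moment relation recorded in \eqref{fourmoment.cumul}, namely $m_4(\mu)=1+r_4(\mu)$ for $\mu\in\mathcal{M}_0^1$. Combining this with the previous computation gives $Var(\mu^2)=m_4(\mu)-1=r_4(\mu)$, which is exactly the claimed identity.

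For part (ii), I would apply Chebyshev's inequality to the nonnegative random variable $X^2$, whose mean is $m_1(\mu^2)=1$ and whose variance equals $Var(\mu^2)=r_4(\mu)$ by part (i). This yields $\mu^2(\{|X^2-1|\ge\epsilon\})\le r_4(\mu)/\epsilon^2$, and passing to the complementary event gives $\mu^2((1-\epsilon,1+\epsilon))=\mu^2(\{|X^2-1|<\epsilon\})\ge 1-r_4(\mu)/\epsilon^2$, as desired.

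There is no serious obstacle here; the only points requiring a little care are the bookkeeping that identifies the moments of $\mu^2$ with the even moments of $\mu$, and the observation that establishing (i) first is precisely what lets Chebyshev's inequality be phrased directly in terms of $r_4(\mu)$ rather than $m_4(\mu)$. Everything else is a direct application of the moment--cumulant identities already collected in the preliminaries.
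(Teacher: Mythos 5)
Your proof is correct and follows essentially the same route as the paper: identify $Var(\mu^2)=m_4(\mu)-1$, invoke the identity $m_4(\mu)=1+r_4(\mu)$ from \eqref{fourmoment.cumul}, and then apply Chebyshev's inequality to $X^2$. Your write-up merely makes the moment bookkeeping $m_k(\mu^2)=m_{2k}(\mu)$ explicit, which the paper leaves implicit.
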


\begin{proof}[Proof]
For part \it{(i)} observe that  $Var(\mu^2)=E(\mu^4)-E(\mu^2)^2=m_4(\mu)-1.$
Hence, by (\ref{fourmoment.cumul}) we obtain that $Var(\mu^2)=r_4(\mu)$.

For part \it{(ii)} we see that by the Chebyshev inequality we have
$$P(|X^2-E(X^2)|<\epsilon)>1-\frac{Var(X^2)}{\epsilon^2},$$
and using (i) we conclude that
$\mu^2((1-\epsilon,1+\epsilon))>1-\frac{r_4(\mu)}{\epsilon^2}.$
\end{proof}

\begin{lemma}\label{ldmuab}
If $\mu \in \mathcal{M}_0^1$ and $\mu((-1-\epsilon,-1+\epsilon)\cup (1-\epsilon,1+\epsilon))\geq 1-\epsilon$ for some $\epsilon\in (0,1)$, then
\begin{equation}
L(\mu,\mathbf{b})\leq \frac{7}{2}\epsilon.
\end{equation}
\end{lemma}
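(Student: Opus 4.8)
The plan is to reduce everything to showing that almost exactly half of the mass of $\mu$ sits near $-1$ and half near $+1$, and then to read off the L\'evy distance directly from the definition. Write $I_-=(-1-\epsilon,-1+\epsilon)$, $I_+=(1-\epsilon,1+\epsilon)$ and $E=\mathbb{R}\setminus(I_-\cup I_+)$, and set $\alpha=\mu(I_-)$, $\beta=\mu(I_+)$, $\rho=\mu(E)=1-\alpha-\beta$; the hypothesis is exactly $\rho\le\epsilon$. I will first establish the two quantitative facts $|\alpha-\tfrac12|\le 2\epsilon$ and $|\beta-\tfrac12|\le 2\epsilon$, and then verify the L\'evy inequality with $\delta=\tfrac72\epsilon$ by a short case analysis.

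The heart of the argument is controlling $\alpha-\beta$ using the first two moments. Since $\mu\in\mathcal{M}_0^1$, splitting $0=m_1(\mu)=\int_{I_-}x\,d\mu+\int_{I_+}x\,d\mu+\int_E x\,d\mu$ and using that points of $I_\pm$ differ from $\pm1$ by at most $\epsilon$ gives $\int_{I_-}x\,d\mu=-\alpha+O(\alpha\epsilon)$ and $\int_{I_+}x\,d\mu=\beta+O(\beta\epsilon)$. The delicate term is $\int_E x\,d\mu$: the naive bound $|\int_E x\,d\mu|\le\rho^{1/2}$ only yields $O(\sqrt\epsilon)$, which is too weak for an $O(\epsilon)$ conclusion. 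This is the main obstacle, and I resolve it with the \emph{second} moment. Because every point of $I_-\cup I_+$ has $x^2>(1-\epsilon)^2$, the mass near $\pm1$ already accounts for almost all of the unit variance, so
$$\int_E x^2\,d\mu=1-\int_{I_-\cup I_+}x^2\,d\mu\le 1-(1-\epsilon)^2(\alpha+\beta)\le 1-(1-\epsilon)^3\le 3\epsilon.$$
Cauchy--Schwarz then upgrades the tail estimate to $|\int_E x\,d\mu|\le(\int_E x^2\,d\mu)^{1/2}\rho^{1/2}\le\sqrt3\,\epsilon$, the crucial $O(\epsilon)$ control. Feeding this back into the mean identity gives $|\alpha-\beta|\le(1+\sqrt3)\epsilon$, and combining with $\alpha+\beta=1-\rho$ yields $|\alpha-\tfrac12|,\,|\beta-\tfrac12|\le\tfrac{2+\sqrt3}{2}\epsilon<2\epsilon$.

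With these mass estimates I translate to pointwise bounds on the cumulative distribution function $F$ of $\mu$: for $x$ to the left of $I_+$ one has $F(x)\le\alpha+\rho\le\tfrac12+3\epsilon$; for $x$ to the right of $I_-$ one has $F(x)\ge\alpha\ge\tfrac12-2\epsilon$; while $F(x)\le\epsilon$ to the left of $I_-$ and $F(x)\ge1-\epsilon$ to the right of $I_+$. Since the L\'evy metric is symmetric, it suffices to check $G(x-\delta)-\delta\le F(x)\le G(x+\delta)+\delta$ for all $x$, where $G$ is the explicit step cumulative distribution function of $\mathbf{b}$ and $\delta=\tfrac72\epsilon$. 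As $\delta>\epsilon$, the horizontal shift by $\delta$ always moves past the $\epsilon$-wide intervals $I_\pm$, so each of the six cases (according to the region in which $x\pm\delta$ lies relative to $-1$ and $1$) reduces to one of the four displayed bounds, the tightest being $\tfrac12+3\epsilon\le\tfrac12+\delta$. Every case closes with room to spare, giving $L(\mu,\mathbf{b})\le\tfrac72\epsilon$.

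Finally, a trivial reduction handles large $\epsilon$: the L\'evy distance between any two probability measures is at most $1$, so the bound is automatic once $\tfrac72\epsilon\ge1$; I may therefore assume $\epsilon<\tfrac27$, which guarantees $\delta<1$ and keeps the relative positions of $1-\delta$, $-1+\delta$ and the intervals $I_\pm$ exactly as used in the case analysis above.
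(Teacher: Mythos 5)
Your proof is correct and follows essentially the same route as the paper's: both arguments use $m_1(\mu)=0$ and $m_2(\mu)=1$ together with the identical second-moment estimate $\int_E x^2\,d\mu\le 1-(1-\epsilon)^2(\alpha+\beta)\le 1-(1-\epsilon)^3\le 3\epsilon$ to force the masses near $-1$ and $+1$ to each lie within $O(\epsilon)$ of $\tfrac12$, and then read off the L\'evy bound. The only variation is technical: where the paper splits the complement of the two intervals into three regions and controls the far-tail contribution to the mean via the pointwise inequality $|t|\le t^2$ (valid since $|t|\ge 1+\epsilon$ there), you treat the whole complement at once by Cauchy--Schwarz, which yields slightly sharper constants ($|\alpha-\tfrac12|\le\tfrac{2+\sqrt{3}}{2}\epsilon<2\epsilon$ versus the paper's $\tfrac12-3\epsilon<p_2,p_4<\tfrac12+\tfrac52\epsilon$), and you make explicit both the final case analysis that the paper leaves as ``easy to see'' and the harmless reduction to $\epsilon<\tfrac27$.
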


\begin{proof} [Proof]
Define $R_1:=(-\infty,-1-\epsilon]$, $R_2:=(-1-\epsilon,-1+\epsilon)$, $R_3:=[-1+\epsilon,1-\epsilon]$, $R_4:=(1-\epsilon,1+\epsilon)$, and $R_5:=[1+\epsilon,\infty)$. Let $p_i:=\mu(R_i)$ for $i=1,2,3,4,5$. Clearly, there exists $t_i \in R_i $ such that
$$\int_{R_i}td\mu(t)=t_ip_i.$$
Note that  $p_1+p_4\geq 1-\epsilon$ by hypothesis. So we have 
\begin{equation}\label{equ.p135}
p_i\leq\epsilon \quad \text{for} \: i=1,3,5.
\end{equation}

Observe that
\begin{align}
|t_1p_1|+|t_5p_5|
&<\int_{R_1}t^2d\mu(t)+\int_{R_5}t^2d\mu(t) 
\nonumber \\
&=1-\int_{R_2}t^2d\mu(t)-\int_{R_3}t^2d\mu(t)-\int_{R_4}t^2d\mu(t)
\nonumber \\
&\leq 1-(1-\epsilon)^2(p_2+p_4)
\nonumber \\
&=1-(1-\epsilon)^2(1-\epsilon)
\nonumber \\
&<3\epsilon,
\nonumber
\end{align}
where the first equality is because of $m_2(\mu)=1$. It is clear that $|t_3p_3|<\epsilon$. Thus, we obtain from $m_1(\mu)=0$ that
$$|t_2p_2+t_4p_4|=|t_1p_1+t_3p_3+t_5p_5|<4\epsilon.$$
Also note that
\begin{align}
|p_2-p_4|-|t_2p_2+t_4p_4|
&\leq |t_2p_2+t_4p_4+p_2-p_4|
\nonumber \\
&=|t_2+1|p_2+|t_4-1|p_4
\nonumber \\
&<\epsilon(p_2+p_4)
\nonumber \\
&<\epsilon.
\nonumber
\end{align}
It follows that $|p_2-p_4|<5\epsilon$, and since $1-\epsilon \leq p_2+p_4 \leq 1$, then 
\begin{equation}\label{equ.p24}
\frac{1}{2}-3\epsilon<p_2,p_4<\frac{1}{2}+\frac{5}{2}\epsilon.
\end{equation}
Using the estimates ($\ref{equ.p135}$) and ($\ref{equ.p24}$), it is easy to see that
$$L(\mu,\mathbf{b})\leq \frac{7}{2}\epsilon.$$

\end{proof}

\begin{theo}\label{theo.levy.mu.b}
Let $\mu\in \mathcal{M}_0^1$. Then 
\begin{equation}
L(\mu,\mathbf{b})\leq \frac{7}{2}\sqrt[3]{m_4(\mu)-1}.
\end{equation}
\end{theo}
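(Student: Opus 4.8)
The plan is to chain the two preceding lemmas together after choosing the free parameter $\epsilon$ optimally. The point is that Lemma \ref{cumul} controls the mass that $\mu^2$ places near $1$ through a Chebyshev estimate whose error term is $\frac{r_4(\mu)}{\epsilon^2}=\frac{m_4(\mu)-1}{\epsilon^2}$ after invoking (\ref{fourmoment.cumul}), whereas Lemma \ref{ldmuab} converts a lower bound of the exact shape $1-\epsilon$ on the mass near $\pm 1$ into the L\'evy estimate $\frac{7}{2}\epsilon$. The cube root appearing in the statement is precisely the value of $\epsilon$ for which the Chebyshev error equals $\epsilon$, so that the conclusion of the first lemma becomes the hypothesis of the second.

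First I would dispose of the cases lying outside the range $\epsilon\in(0,1)$ required by Lemma \ref{ldmuab}. If $m_4(\mu)=\infty$ the right-hand side is infinite and nothing is to be proved, so I may assume $m_4(\mu)<\infty$; by Lemma \ref{cumul} (i) the number $m_4(\mu)-1=r_4(\mu)=Var(\mu^2)$ is nonnegative, hence $\epsilon_0:=\sqrt[3]{m_4(\mu)-1}$ is well defined. Since any two probability measures satisfy $L(\mu,\nu)\le 1$ (take $\epsilon=1$ in the defining infimum), whenever $\epsilon_0\ge 1$ we get $\frac{7}{2}\epsilon_0\ge\frac{7}{2}>1\ge L(\mu,\mathbf{b})$ and the bound is automatic.

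For the main case $0<\epsilon_0<1$ I set $\epsilon=\epsilon_0$. By Lemma \ref{cumul} (ii) and (\ref{fourmoment.cumul}) we have $\mu^2((1-\epsilon,1+\epsilon))\ge 1-\frac{m_4(\mu)-1}{\epsilon^2}$, and the defining relation $\epsilon^3=m_4(\mu)-1$ collapses the error term to exactly $\epsilon$, so that $\mu^2((1-\epsilon,1+\epsilon))\ge 1-\epsilon$. Passing this through (\ref{mu.mu2}) gives $\mu((-1-\epsilon,-1+\epsilon)\cup(1-\epsilon,1+\epsilon))\ge 1-\epsilon$, which is exactly the hypothesis of Lemma \ref{ldmuab}; applying that lemma yields $L(\mu,\mathbf{b})\le\frac{7}{2}\epsilon=\frac{7}{2}\sqrt[3]{m_4(\mu)-1}$.

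I do not expect a genuine obstacle here: once the correct $\epsilon$ is identified the argument is a direct assembly of the two lemmas, the cube root being forced by matching the Chebyshev error to the tolerance of Lemma \ref{ldmuab}. The only points needing care are the boundary cases---the degenerate value $\epsilon_0=0$, where $Var(\mu^2)=0$ forces $\mu^2=\delta_1$ and hence $\mu=\mathbf{b}$ with $L=0$ (equivalently, one lets $\epsilon\downarrow 0$ in Lemma \ref{ldmuab}), and the trivial regime $\epsilon_0\ge 1$---both of which escape the open interval $(0,1)$ on which Lemma \ref{ldmuab} operates and are settled by the elementary bound $L\le 1$.
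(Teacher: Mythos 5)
Your proposal is correct and follows essentially the same route as the paper: chain Lemma \ref{cumul}~(ii) with inequality (\ref{mu.mu2}), choose $\epsilon=\sqrt[3]{r_4(\mu)}=\sqrt[3]{m_4(\mu)-1}$ so the Chebyshev error matches the tolerance of Lemma \ref{ldmuab}, and conclude. Your explicit treatment of the boundary cases $\epsilon_0\ge 1$ (via $L\le 1$) and $\epsilon_0=0$ (via $Var(\mu^2)=0$ forcing $\mu=\mathbf{b}$) is a small improvement in completeness over the paper, which simply restricts to ``when $r_4(\mu)<1$'' and leaves the remaining cases implicit.
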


\begin{proof}[Proof]
By Lemma $\ref{cumul}$ and inequality ($\ref{mu.mu2}$), we see that
$$\mu((-1-\epsilon,-1+\epsilon)\cup (1-\epsilon,1+\epsilon))\geq \mu^2((1-\epsilon,1+\epsilon))\geq 1-\frac{r_4(\mu)}{\epsilon^2}.$$

Taking $\epsilon=\sqrt[3]{r_4(\mu)}$, we obtain
$$\mu( (-1-\sqrt[3]{r_4(\mu)},-1+\sqrt[3]{r_4(\mu)} )\cup (1-\sqrt[3]{r_4(\mu)},1+\sqrt[3]{r_4(\mu)}))\geq1-\sqrt[3]{r_4(\mu)}.$$
By Lemma $\ref{ldmuab}$ we conclude  that when $r_4(\mu)<1$, then
$$L(\mu,\mathbf{b})\leq \frac{7}{2}\sqrt[3]{r_4(\mu)}=\frac{7}{2}\sqrt[3]{m_4(\mu)-1}.$$
\end{proof}

The following proposition shows that the bound in the previous theorem is sharp.
\begin{prop}
For all $\alpha>\frac{1}{3}$ and for all $C>0$ there exists $\mu \in \mathcal{M}_0^1$ such that 
\begin{equation}\nonumber
L(\mu,B)>C\cdot r_4(\mu)^{\alpha}
\end{equation} 
\end{prop}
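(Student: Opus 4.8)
The plan is to exhibit, for each fixed $\alpha>\tfrac13$, a one-parameter family $\mu_\epsilon\in\mathcal M_0^1$ (with $\epsilon>0$ small) for which the L\'evy distance to $\mathbf b$ is of order $\epsilon$ while the fourth Boolean cumulant is only of order $\epsilon^3$. Then $L(\mu_\epsilon,\mathbf b)/r_4(\mu_\epsilon)^\alpha\gtrsim\epsilon^{1-3\alpha}\to\infty$ as $\epsilon\downarrow 0$, which eventually exceeds any prescribed $C$. This is exactly the behaviour one should target: it matches the $r_4^{1/3}$ rate of Theorem \ref{theo.levy.mu.b} and shows that no exponent strictly larger than $\tfrac13$ can survive.

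Concretely, I would start from the symmetric four-atom measure
$$\nu_\epsilon=\Big(\tfrac12-\epsilon\Big)(\delta_{-1}+\delta_{1})+\epsilon\,(\delta_{-1+\epsilon}+\delta_{1-\epsilon}),$$
obtained from $\mathbf b$ by pushing mass $\epsilon$ inward from each of $\pm1$ through a distance $\epsilon$. It is a probability measure with $m_1(\nu_\epsilon)=0$ by symmetry, and a direct computation gives $m_2(\nu_\epsilon)=1-4\epsilon^2+2\epsilon^3$. I would then normalize by a dilation, setting $\mu_\epsilon:=D_{a_\epsilon}\nu_\epsilon$ with $a_\epsilon=m_2(\nu_\epsilon)^{-1/2}=1+2\epsilon^2+O(\epsilon^3)$, so that $\mu_\epsilon\in\mathcal M_0^1$ and has compact support (hence all moments).

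For the cumulant estimate I would invoke Lemma \ref{cumul}(i), $r_4(\mu_\epsilon)=\mathrm{Var}(\mu_\epsilon^2)$. Under $\mu_\epsilon$ the variable $X^2$ takes only two values, both within $O(\epsilon)$ of $1$: it equals $a_\epsilon^2=1+O(\epsilon^2)$ with mass $1-2\epsilon$, and $a_\epsilon^2(1-\epsilon)^2=1-2\epsilon+O(\epsilon^2)$ with mass $2\epsilon$. Thus the mass $2\epsilon$ sits at distance $\approx2\epsilon$ from the mean of $X^2$, so $\mathrm{Var}(\mu_\epsilon^2)=8\epsilon^3+O(\epsilon^4)$ (equivalently, the $\epsilon^2$ terms in $m_4-r_2^2$ cancel). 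For the L\'evy distance I would read off the cumulative distribution functions: on $[-1,-1+\epsilon)$ one has $F_{\nu_\epsilon}=\tfrac12-\epsilon$ while $F_{\mathbf b}=\tfrac12$, so the filled graphs enclose an axis-parallel square of side $\epsilon$ (and likewise near $+1$), giving $L(\nu_\epsilon,\mathbf b)\ge\epsilon$; the dilation by $a_\epsilon=1+O(\epsilon^2)$ perturbs this by only $O(\epsilon^2)$, so $L(\mu_\epsilon,\mathbf b)\ge\epsilon-O(\epsilon^2)$. Combining the two estimates, $L(\mu_\epsilon,\mathbf b)/r_4(\mu_\epsilon)^\alpha\ge(\tfrac12\epsilon)/(9\epsilon^3)^\alpha\to\infty$ because $1-3\alpha<0$, and choosing $\epsilon$ small enough makes this larger than any given $C$.

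The step I expect to be most delicate is pinning down the order of $r_4$: the first-order effects of displacing mass contribute at order $\epsilon^2$ to both $m_2$ and $m_4$, and the whole argument hinges on these cancelling so that $r_4=\Theta(\epsilon^3)$ rather than $\Theta(\epsilon^2)$ — were it $\Theta(\epsilon^2)$, the construction would only defeat exponents $\alpha>\tfrac12$. The clean way to see the cancellation is the identity $r_4=\mathrm{Var}(X^2)$ together with the observation that moving mass $\epsilon$ through a \emph{horizontal} distance $\epsilon$ moves $X^2$ by $O(\epsilon)$ and hence changes $\mathrm{Var}(X^2)$ only at order $\epsilon\cdot\epsilon^2$. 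A secondary, routine point is verifying that the normalizing dilation and the exact bookkeeping of the $O(\epsilon^2)$ and $O(\epsilon^3)$ terms do not spoil either estimate; this reduces to $a_\epsilon-1=O(\epsilon^2)$, which is immediate.
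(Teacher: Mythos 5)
Your proposal is correct and follows essentially the same route as the paper: both perturb the Bernoulli law by atoms of mass of order $\epsilon$ displaced by order $\epsilon$, so that $L(\mu_\epsilon,\mathbf{b})\asymp\epsilon$ while $r_4(\mu_\epsilon)\asymp\epsilon^3$, and then let $\epsilon\downarrow 0$ using $1-3\alpha<0$. The only difference is cosmetic: the paper places its six atoms at $\pm 1$ and $\pm\sqrt{1\pm\epsilon}$, so the measure lies in $\mathcal{M}_0^1$ exactly and $r_4=2\epsilon^3$ exactly, which spares the normalizing dilation and the $O(\epsilon^2)$ bookkeeping your four-atom construction requires.
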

\begin{proof}
Fix $\alpha>\frac{1}{3}$ and $C>0$. Let $\epsilon\in (0,1)$. Define 
\begin{equation}\nonumber
\mu_{\epsilon}=\frac{\epsilon}{2}\delta_{-\sqrt{1+\epsilon}}+(\frac{1}{2}-\epsilon)\delta_{-1}+\frac{\epsilon}{2}\delta_{-\sqrt{1-\epsilon}}+\frac{\epsilon}{2}\delta_{\sqrt{1-\epsilon}}+(\frac{1}{2}-\epsilon)\delta_{1}+\frac{\epsilon}{2}\delta_{\sqrt{1+\epsilon}}
\end{equation}
Clearly $\mu_{\epsilon} \in \mathcal{M}_0^1$. We also have that
\begin{align}
m_4(\mu_{\epsilon})
&=\epsilon(1+\epsilon)^2+\epsilon(1-\epsilon)^2+(1-2\epsilon) \nonumber \\
&=1+2\epsilon^3. \nonumber
\end{align}
So by ($\ref{fourmoment.cumul}$) we obtain that $r_4(\mu)=2\epsilon^3$.

On the other hand, since $\mu_\epsilon$ is atomic, then,  for  $\epsilon<1$, one has 
\begin{equation}\nonumber
L(\mu_{\epsilon},B)=min\lbrace \frac{\epsilon}{2}, 1-\sqrt{1-\epsilon}, \sqrt{1+\epsilon}-1\rbrace\geq\frac{\epsilon}{4.}
\end{equation}

It is not hard to see that for $\epsilon$ small enough we have that $\frac{\epsilon}{4}>C(2^{\alpha}\epsilon^{3\alpha})$. For such $\epsilon$  
\begin{equation}\nonumber
L(\mu_{\epsilon},B)>C\cdot r_4(\mu)^{\alpha}.
\end{equation} 
\end{proof}

Now we are able to prove Theorem \ref{teo.noacotado} .

\begin{proof}[Proof of Theorem \ref{teo.noacotado}]
It follows from (\ref{cumu.adit}) and (\ref{cumu.dila}) that 
$$r_4(\mu_n)=r_4(D_{\sqrt{n}}\mu^{\uplus n})=\frac{1}{n^2}n r_4(\mu)=\frac{r_4(\mu)}{n}.$$

By Theorem $\ref{theo.levy.mu.b}$ we conclude that
$$L(\mu_n,\mathbf{b})\leq \frac{7}{2}\sqrt[3]{\frac{r_4(\mu)}{n}}=\frac{7}{2}\sqrt[3]{\frac{m_4(\mu)-1}{n}}.$$
\end{proof}

Now we proceed to prove the main theorem of the paper.

\begin{proof}[Proof of Theorem 2]
Using ($\ref{ftd}$) and ($\ref{ftc}$), we obtain
$$F_{\mu_n}(z)=(1-n)z-\sqrt{n}F_{\mu}(\sqrt{n}z) \quad \text{for} \:z\in\mathbb{C}^+.$$
By Proposition $\ref{neva}$ there exists a measure $\nu\in \mathcal{M}$ such that $F_{\mu}(z)=z-G_{\nu}(z)$ for $z\in \mathbb{C}^+$.
It follows that
\begin{align*}
F_{\mu_n}(z)&=(1-n)z-\sqrt{n}(\sqrt{n}z-G_{\nu}(\sqrt{n}z)) \nonumber\\
            &=z-\sqrt{n}G_{\nu}(\sqrt{n}z))\nonumber \\
            &=z-G_{D_{\frac{1}{\sqrt{n}}}\nu}(z) \qquad for \: z\in \mathbb{C}^+,
\end{align*}
where the second equality is due to (\ref{gtd}).

 Note that $supp \: (\nu) \subset [-K,K]$. Indeed, suppose that $x\in \mathbb{R}\setminus [K,K]$. Since $supp \: (\mu) \subset [-K,K]$, then $x\in \mathbb{R}\setminus supp \: (\mu)$. Therefore, by ($\ref{CTsupp}$) we obtain that $G_{\mu}(x)\neq 0$. Finally, part iii) of Proposition $(\ref{hase})$ implies that  $x\in \mathbb{R}\setminus supp \: (\nu)$.

Let us write $\hat{\nu}=D_{\frac{1}{\sqrt{n}}}\nu$. So we  have that $supp (\hat{\nu}) \subset [\frac{-K}{\sqrt{n}},\frac{K}{\sqrt{n}}]$ and
\begin{equation} \label{u_n,v_n}
F_{\mu_n}(z)=z-G_{\hat{\nu}}(z) \quad \text{for}\: z\in \mathbb{C}^+. 
\end{equation} 

By the third part of Proposition \ref{hase}, we conclude that
$supp(\mu_n) \subset [\frac{-K}{\sqrt{n}},\frac{K}{\sqrt{n}}] \cup \lbrace x\in \mathbb{R} \setminus  [\frac{-K}{\sqrt{n}},\frac{K}{\sqrt{n}}] \:|\: F_{\mu_n}(x)=0 \rbrace$. To conclude the proof of part 1), it is left to prove that there are only two zeros $x_1$ and $x_2$ which satisfy the conditions $|(-1)-x_1|\leq \frac{K}{\sqrt{n}}$ and $|1-x_2|\leq \frac{K}{\sqrt{n}}$. The second part of Proposition $\ref{hase}$ implies for $z\in \mathbb{C}\setminus [\frac{-K}{\sqrt{n}},\frac{K}{\sqrt{n}}]$ that  $F_{\mu_n}(z)=z-G_{\hat{\nu}}(z)$ and that $F_{\mu_n}(z)$ is analitic. Therefore, we obtain from the definition of Cauchy transform that 
  
$$F_{\mu_n}'(x)=1+\int_{-\infty}^{\infty} \frac{1}{(t-x)^2}d\hat{\nu}(t) \quad \text{for}\: x\in \mathbb{R}\setminus [\frac{-K}{\sqrt{n}},\frac{K}{\sqrt{n}}].$$
In particular, $F_{\mu_n}(x)$ is increasing in $(\frac{K}{\sqrt{n}},\infty)$ and can have at most one zero there.
It is clear that
$$F_{\mu_n}(x)>x-\frac{1}{x-K/\sqrt{n}}>K/\sqrt{n} \quad \text{for}\: x>1+K/\sqrt{n}$$
and 
$$F_{\mu_n}(x)<x-\frac{1}{x+K/\sqrt{n}}<-K/\sqrt{n} \quad \text{for}\: K/\sqrt{n}<x<1-K/\sqrt{n}.$$
Since $F_{\mu_n}(x)$ is continous in $(\frac{K}{\sqrt{n}},\infty)$, it must have a zero $x_2$ in $[1-\frac{K}{\sqrt{n}},1+\frac{K}{\sqrt{n}}]$.

A similar argument shows that $F_{\mu_n}(x)$ has only a zero $x_1$ in $(-\infty,\frac{-K}{\sqrt{n}})$ bounded in $[-1-\frac{K}{\sqrt{n}},-1+\frac{K}{\sqrt{n}}]$. We conclude the proof of part 1).

Using (\ref{u_n,v_n}) and Proposition \ref{neva}, we obtain that $m_1(\mu_n)=0$ and $m_2(\mu_n)=1$. The idea of the rest of the proof is that these two moments force the mass of $\mu_n$ to concentrate evenly in $x_1$ and $x_2$.

We put $p=\mu_n(\lbrace x_1 \rbrace)$, $q=\mu_n(\lbrace x_2 \rbrace)$, and $r=\mu_n([\frac{-K}{\sqrt{n}},\frac{K}{\sqrt{n}}])$. Note that $p+q+r=1$ by part 1). It is clear that there exist  $y_1 \in [\frac{-K}{\sqrt{n}},\frac{K}{\sqrt{n}}]$ and $y_2 \in [0,\frac{K}{\sqrt{n}}]$ such that 
$\int_{-K/\sqrt{n}}^{K/\sqrt{n}} x \: d\mu_n(x)=y_1r$ and $\int_{-K/\sqrt{n}}^{K/\sqrt{n}} x^2 \: d\mu_n(x)=y_2^2r$. Define $\epsilon:=-1-x_1$ and $\delta:=1-x_2$. Since $m_1(\mu_n)=0$, then we have that $x_1p+y_1r+x_2q=0$; it follows that $p(-1+\epsilon)+ry_1+q(1+\delta)=0$. Therefore, we deduce the inequalities

\begin{equation}\label{pq.ine}
|q-p|\leq p|\epsilon|+r|y_1|+q|\delta|\leq (p+q+r)\frac{K}{\sqrt{n}}=\frac{K}{\sqrt{n}}.
\end{equation}

Since $p+q\leq 1$, then $p+p-\frac{K}{\sqrt{n}}\leq 1$. Therefore we obtain that $p\leq \frac{1}{2}+\frac{K}{2\sqrt{n}}$. Similarly, we can conclude that $q\leq \frac{1}{2}+\frac{K}{2\sqrt{n}}$. Now, we have from $m_2(\mu_n)=1$
that $x_1^2p+y_2^2r+x_2^2q=1$. It follows that $(1+2\epsilon+\epsilon^2)p+y_2^2r+(1+2\delta+\delta^2)q=1$, and we get the estimate
$$p+q=1-(\epsilon^2p+y_2^2r+\delta^2q)-2(\epsilon p + \delta q)\ge 1-\frac{K^2}{n}-2\frac{K}{\sqrt{n}}\ge 1-3\frac{K}{\sqrt{n}}.$$
Since $q\leq p + \frac{K}{\sqrt{n}}$, then $2p+\frac{K}{\sqrt{n}}\ge 1-3\frac{K}{\sqrt{n}}$. It follows that $p\ge \frac{1}{2}-\frac{2K}{\sqrt{n}}$ and $q\ge \frac{1}{2}-\frac{2K}{\sqrt{n}}$. Finally, we conclude that $r\leq 4\frac{K}{\sqrt{n}}$ because of $p+q+r=1$.

It follows from the estimates obtained for $p, q$, and $r$ that
$$L(\mu_n,\mathbf{b})\leq \frac{2K}{\sqrt{n}}.$$

\end{proof}

Finally, the next example shows that the estimate given for measures of bounded support is sharp.

\begin{exa}\label{ejemplo} 
Let $n$ be a positive integer. Define $p_n:=\frac{1}{2}\frac{\sqrt{1+4n}+1}{\sqrt{1+4n}}$, $q_n:=\frac{1}{2}\frac{\sqrt{1+4n}-1}{\sqrt{1+4n}}$, $x_n:=\frac{1-\sqrt{1+4n}}{\sqrt{4n}}$, and $y_n:=\frac{1+\sqrt{1+4n}}{\sqrt{4n}}$. Let $\mu_n$ be the probability measure given by $\mu_n:=p_n\delta_{x_n}+q_n\delta_{y_n}$.  Then $\mu_n=D_{\frac{1}{\sqrt{n}}}\mu_1^{\uplus n}$, where $\mu:=\mu_1$, and $L(\mu_n,\mathbf{b})\ge \frac{1}{6\sqrt{n}}$.

 Indeed, by Remark $\ref{rem.cauchy}$, to prove $\mu_n=D_{\frac{1}{\sqrt{n}}}\mu^{\uplus n}$, it is sufficient to show that $F_{\mu_n}(z)=F_{D_{\frac{1}{\sqrt{n}}}\mu^{\uplus n}}(z)$ for $z\in \mathbb{C}^+.$
First we compute the Cauchy transform of $\mu_n$:

\begin{align}
G_{\mu_n}(z)&=\frac{p_n}{z-x_n}+\frac{q_n}{z-y_n}=\frac{p_n(z-y_n)+q_n(z-x_n)}{(z-x_n)(z-y_n)} \nonumber\\
&=\frac{p_n(z-y_n)+q_n(z-x_n)}{(z-x_n)(z-y_n)}=\frac{z-y_np_n-x_nq_n}{(z-x_n)(z-y_n)}\nonumber \\
&=\frac{z-x_n-y_n}{z^2-(x_n+y_n)+x_ny_n}=\frac{z-1/\sqrt{n}}{z^2-z/\sqrt{n}-1}.\nonumber   
\end{align}
Hence, we have that $F_{\mu_n}(z)=\frac{\sqrt{n}z-z-\sqrt{n}}{\sqrt{n}z-1}$. In particular $F_{\mu}(z)=\frac{z^2-z-1}{z-1}$.

On the other hand, we compute 
\begin{align}
F_{D_{\frac{1}{\sqrt{n}}}\mu^{\uplus n}}(z)&=\frac{1}{\sqrt{n}}((1-n)\sqrt{n}z+nF_{\mu}(\sqrt{n}z)=(1-n)z+\sqrt{n}F_{\mu}(\sqrt{n}z) \nonumber\\
&=(1-n)z+\sqrt{n}(\frac{nz^2-\sqrt{n}z-1}{\sqrt{n}z-1})=(z-nz)(\sqrt{n}z-1)+\sqrt{n}nz^2-nz-\sqrt{n},\nonumber \\
&=\frac{\sqrt{n}z-z-\sqrt{n}}{\sqrt{n}z-1}=F_{\mu_n}(z).\nonumber  
\end{align}
Now it is easy to see, that $L(\mu_n,\mathbf{b})=max\{|-1-x_n|,|1-y|,|1/2-p_n|\}$, and since $|1/2-p_n|\ge \frac{1}{6\sqrt{n}}$, then $L(\mu_n,\mathbf{b})\ge \frac{1}{6\sqrt{n}}$.

\end{exa}


\end{document}